\def\red{\textcolor{red}}
\theoremstyle{plain}
\newtheorem{proposition}{Proposition}[section]
\newtheorem{theoremb}[proposition]{Theorem}
\theoremstyle{definition}
\newtheorem{definition}[proposition]{Definition}
\newtheorem{example}[proposition]{Example}
\theoremstyle{remark}
\def\cG{{\mathcal G}}
\newcommand{\bz}{\mathbb Z}
\newcommand{\bq}{\mathbb Q}
\def\hL{{\widehat{\mathbb L}}}
\def\ad{{\rm ad}}
    \newcommand{\lasu}{{\mathfrak{L}}}
 \newcommand{\lib }{\mathbb{L}}
 \newcommand{\MC}{\operatorname{{\rm MC}}}
\newcommand{\mc}{{\MC}}
   \newcommand{\libc}{{\widehat\lib}}
\title{The Deligne groupoid of the Lawrence-Sullivan interval}
\author{Urtzi Buijs, Yves F\'elix, Aniceto Murillo and Daniel Tanr\'e\footnote{The first author has been partially supported by the Marie Curie COFUND programme U-mobility, co-financed by the University of M\'alaga, the European Commision FP7 under GA No. 246550, and MINECO (COFUND2013-40259).\hfill\break
\indent The first and third authors have been partially supported by the Junta de Andaluc{\'\i}a grant FQM-213.\hfill\break\indent The  authors are partially supported by the MINECO grant MTM2013-41768-P.}}
\begin{document}

\maketitle

\begin{abstract}
In this paper we completely describe the Deligne groupoid of the Lawrence-Sullivan interval as two parallel rational lines.\end{abstract}

\section*{Introduction}
Let $\mc(L)$ be the set of Maurer-Cartan elements of  a differential graded Lie algebra $L$ over $\bq$ which is assumed  the base field henceforth. The group  $L_0$ endowed with the Baker-Campbell-Hausdorff product, acts on $\mc(L)$ as ``a group of gauge transformations on flat connections" (see  next section for precise and explicit terms). The groupoid associated to this action, known as the {\em Deligne groupoid}, was first introduced in \cite{goldmill} as a fundamental object to understand the Deligne principle by which every deformation functor is governed precisely by such a groupoid. See also \cite{getz,hi}.

On the other hand, \emph{the Lawrence-Sullivan interval} \cite{LS} is a complete differential free graded Lie algebra $\lasu=(\widehat\lib(a,b,x),\partial)$ generated by two Maurer-Cartan elements $a,b$ and by an element $x$ of degree $0$ joining $a$ and $b$ via the above action (see also next section). This particular object plays an essential role on the topological realization of (complete) differential graded Lie algebras \cite{bumu2,bufemutan} as well as on their homotopical behaviour \cite{bumu1,patan}.

In this note, we explicitly describe the Deligne groupoid of $\lasu$ and prove the following (see Theorem \ref{yanose} for a precise statement):

\begin{theoremb}\label{main} The deligne groupoid of $\lasu$ is isomorphic to two disjoint copies of the rationals.
\end{theoremb}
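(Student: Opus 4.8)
The plan is to realize the Deligne groupoid as the action groupoid of $L_0$ on $\mc(\lasu)$ and to analyze this action completely. First I would pin down the gauge group. Since $a,b$ have degree $-1$ and $x$ degree $0$, a bracket is homogeneous of degree $0$ only if it contains no occurrence of $a$ or $b$; as $[x,x]=0$ this forces $L_0=\bq x$, and the Baker--Campbell--Hausdorff product collapses to addition, so $(L_0,\bak)\cong(\bq,+)$ via $t\mapsto tx$. The Deligne groupoid is thus the action groupoid of $(\bq,+)$ on $\mc(\lasu)$, and the two evident objects are $a$ (whose orbit contains $b=x\cdot a$) and the trivial element $0$. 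The statement then amounts to showing that these two orbits exhaust $\mc(\lasu)$ and that the action is free, so that each orbit is a principal homogeneous $\bq$-set and its connected groupoid is a copy of the rationals.

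Second, I would extract from the Maurer--Cartan equation the invariant that separates the two orbits. Writing any degree $-1$ element uniquely as $z=\sum_{k\ge 0}\bigl(\alpha_k(\ad_x)^k a+\beta_k(\ad_x)^k b\bigr)$ and grading by the number of factors of $x$ (the \emph{$x$-weight}), the weight-$0$ part of $\partial z+\tfrac12[z,z]=0$ splits into three scalar equations, in the coefficients of $[a,a]$, $[a,b]$, $[b,b]$. The coefficients of $[a,a]$ and $[b,b]$ determine $\alpha_1,\beta_1$ from $\alpha_0,\beta_0$, while the coefficient of $[a,b]$ is a compatibility condition which, after simplifying with $[b,a]=[a,b]$, becomes exactly $\sigma^2-\sigma=0$ for $\sigma:=\alpha_0+\beta_0$. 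Hence every Maurer--Cartan element satisfies $\sigma\in\{0,1\}$. The standard formula for the gauge action modifies the weight-$0$ part of $z$ only by adding a rational multiple of $a-b$ (bracketing with $x$ raises the weight), so $\sigma$ is gauge invariant; it equals $1$ on the orbit of $a$ and $0$ on the orbit of $0$, and these orbits are therefore distinct. The same computation shows the weight-$0$ part moves by a nonzero multiple of $t$, so no nonzero $t$ can fix $z$: the action is free and each orbit is a free $\bq$-torsor.

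Third, the crux is to show that each level $\sigma=c$ is a single orbit. Using freeness I may gauge-normalize the weight-$0$ part of a given Maurer--Cartan element to $(\alpha_0,\beta_0)=(1,0)$ when $c=1$ and to $(0,0)$ when $c=0$; at these values the weight-$0$ equations already force $\alpha_1=\beta_1=0$. I then claim the only such element is $a$ (resp.\ $0$), proved by induction on the $x$-weight: assuming $z_1=\dots=z_w=0$, the bracket term $[z,z]_w$ vanishes and the weight-$w$ equation reduces to $T_w(\alpha_{w+1},\beta_{w+1})=0$, where $T_w$ sends $\alpha a+\beta b$ to the weight-$w$ component of $\partial\bigl((\ad_x)^{w+1}(\alpha a+\beta b)\bigr)$, i.e.\ the part obtained by replacing one $x$ with the weight-$0$ term $\kappa(a-b)$ of $\partial x$. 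The main obstacle is the injectivity of $T_w$. I would prove it by projecting onto pure multidegrees: the component of $T_w(\alpha,\beta)$ with two $a$'s and $w$ factors of $x$ depends only on $\alpha$ and equals $c_w\,\kappa\,\alpha$ times a fixed nonzero bracket (for $w=1$ one computes $3\,\alpha\,[a,[x,a]]$ up to the nonzero factor $\kappa$), and symmetrically the two-$b$ component depends only on $\beta$; injectivity thus reduces to the nonvanishing of the combinatorial constants $c_w$ for all $w$, which is the technical heart of the argument. Granting this, the induction forces $z_{w+1}=0$ for every $w$, so the normalized element is exactly $a$ (resp.\ $0$) and each $\sigma$-level is one orbit. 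Combining the three steps, $\mc(\lasu)$ is the disjoint union of the free $\bq$-torsors through $a$ and through $0$, and the Deligne groupoid is two disjoint copies of $\bq$.
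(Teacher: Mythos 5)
Your proof has the same skeleton as the paper's: your three weight-zero scalar equations are exactly the system the paper extracts from the coefficients of $[a,a]$, $[b,b]$ and $[a,b]$ (yielding $\sigma^2=\sigma$), and your induction on $x$-weight to show that the normalized elements must be exactly $a$ or $0$ is exactly the paper's Proposition~\ref{unico}, that a Maurer--Cartan element of $\lasu$ is determined by its linear part. Two points where you genuinely deviate, both to your credit: you make explicit that $L_0=\bq x$ with the Baker--Campbell--Hausdorff product reducing to addition (the paper leaves this implicit), and you separate the two families by observing that $\sigma=\alpha_0+\beta_0$ is a gauge invariant, since the action of $tx$ changes the linear part by $t(a-b)$. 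The paper instead invokes the subdivision theorem (Theorem~\ref{prop:subdivision}) to produce a DGL morphism $f\colon\lasu\to\lasu$ with $f(a)=a$, $f(b)=0$, $f(x)=\lambda x$, and rules it out by comparing linear parts; your invariance argument is more direct and gives freeness of the action in the same stroke.

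However, there is a genuine gap at the step you yourself call ``the technical heart'': the injectivity of $T_w$, equivalently the nonvanishing of the constants $c_w$ for every $w$, is assumed (``Granting this\dots'') rather than proved; you only verify $w=1$, where $T_1$ produces $3\,[a,[x,a]]$. This cannot be waved through: the multigraded component spanned by brackets with two $a$'s and $w$ letters $x$ has dimension greater than one for $w\ge 2$, so even your formulation ``equals $c_w\kappa\alpha$ times a fixed nonzero bracket'' presupposes structure that needs justification, and what must really be shown is that the Lie element $\sum_{i+j=w}\ad_x^i[a,\ad_x^j a]$ is nonzero for all $w$. Without this, each level $\sigma\in\{0,1\}$ could a priori contain Maurer--Cartan elements other than the gauge translates of $a$ and $0$, i.e.\ more than one orbit, and the theorem would fail. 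The paper closes exactly this hole in Proposition~\ref{unico} by passing to the universal enveloping algebra: modulo the ideal $K$ generated by words containing at least $w+1$ letters $x$, one computes in $U\lasu$ that
$$\partial\bigl(\ad_x^{w+1}a\bigr)\equiv (b-a)ax^{w}+x\Gamma_a,\qquad \partial\bigl(\ad_x^{w+1}b\bigr)\equiv (b-a)bx^{w}+x\Gamma_b,$$
so that the words of the form $aax^{w}$ (resp.\ $bbx^{w}$) can only arise from the $a$-component (resp.\ the $b$-component), and none of them can be absorbed into terms beginning with $x$; reading off these coefficients in the tensor algebra kills both $\alpha_{w+1}$ and $\beta_{w+1}$. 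Importing this enveloping-algebra computation (or any other proof of the nonvanishing above) is what your proposal still needs in order to be complete.
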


This readily implies that its nerve is a simplicial set homotopy equivalent to two disjoint points, which retrieves by a different argument the known geometrical realization of the Lawrence-Sullivan interval \cite[Ex. 5.6]{bumu2}, \cite[\S4]{bufemutan}.

As another consequence we also obtain that any perturbation of   $\lasu$ produces an isomorphic DGL.

\begin{theoremb}\label{main2} Let $z\in\mc(\lasu)$. Then, $(\widehat\lib(a,b,x),\partial_z)$ in which $\partial_z=\partial+\ad_z$, is isomorphic to $\lasu$.
\end{theoremb}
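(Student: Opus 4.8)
The plan is to realize $(\widehat\lib(a,b,x),\partial_z)$ as a Lawrence--Sullivan configuration in disguise: I will locate inside it a free system of generators $\{A,B,X\}$ such that $A,B$ are Maurer--Cartan and $X$ implements a gauge equivalence between them, and then send $A\mapsto a$, $B\mapsto b$, $X\mapsto x$.

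First I would record two elementary facts about the twisting $\partial_z=\partial+\ad_z$. Since $z\in\mc(\lasu)$ we have $\partial_z^2=\ad_{\partial z+\frac12[z,z]}=0$, so $(\widehat\lib(a,b,x),\partial_z)$ is again a complete free DGL. Moreover a direct computation yields the \emph{translation principle}: $w\in\mc(\widehat\lib(a,b,x),\partial_z)$ if and only if $w+z\in\mc(\lasu)$, and the bijection $w\mapsto w+z$ is equivariant for the gauge actions, $(g\cdot_{\partial_z}w)+z=g\cdot_\partial(w+z)$ (the extra terms cancel because $[z,g]=-\ad_g z$ in degree zero). Hence the Deligne groupoid of $(\widehat\lib(a,b,x),\partial_z)$ is the translate of that of $\lasu$; in particular, by \thmref{yanose}, it has exactly two gauge classes of Maurer--Cartan elements.

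Next I would use that the gauge group is minuscule. As $|a|=|b|=-1$ and $|x|=0$, every Lie monomial has degree $-(\#a+\#b)$, so the degree-zero part of $\widehat\lib(a,b,x)$ is exactly $\bq\,x$, and $[x,x]=0$ identifies it with $(\bq,+)$; thus any gauge element is literally a scalar multiple $\lambda x$. Writing $\mathcal O_a$ and $\mathcal O_0$ for the gauge classes of $a$ and of $0$, and recalling $b\in\mathcal O_a$ and $c:=x\cdot_\partial 0\in\mathcal O_0$, I would split by \thmref{yanose}: if $z\in\mathcal O_0$ put $A=a-z$, $B=b-z$; if $z\in\mathcal O_a$ put $A=-z$, $B=c-z$. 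In both cases the translation principle makes $A,B$ gauge-equivalent $\partial_z$-Maurer--Cartan elements in a common class, so there is $X=\lambda x$ implementing the equivalence. Using the linear parts $(1-t)a+tb$ of $t\cdot_\partial a$ and $-ta+tb$ of $t\cdot_\partial 0$, the determinant of the linear parts of $A,B$ comes out to $1-\epsilon(z)$ in the first case and $-\epsilon(z)$ in the second, where $\epsilon(z)\in\{0,1\}$ is the gauge-invariant sum of the $a$- and $b$-coefficients on the class of $z$; either way it is $\pm1\neq0$, so $A$ and $B$ have independent linear parts and $\lambda\neq0$. Completeness then upgrades this to the statement that $\{A,B,X\}$ freely generates $\widehat\lib(a,b,x)$, and $A\mapsto a$, $B\mapsto b$, $X\mapsto x$ is the desired DGL isomorphism onto $\lasu$.

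The main obstacle is the step asserting that the connecting element $X$ satisfies \emph{exactly} the Lawrence--Sullivan relation for $\partial_z$, $A$, $B$: one must check that the datum $X\cdot_{\partial_z}A=B$ unwinds into the precise Bernoulli-series expression for $\partial_z X$, with the sign conventions used here, so that renaming really lands in $\lasu$. I would isolate this gauge/relation correspondence (together with the soft fact that independence of linear parts forces free generation through the lower central series of a complete free Lie algebra) as a lemma quoted from the description of $\lasu$ in \cite{LS} and the earlier sections, rather than re-deriving the Bernoulli identities by hand.
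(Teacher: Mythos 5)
Your proof is correct, and on the harder half of the theorem it takes a genuinely different route from the paper's. The raw ingredients coincide: the two-family classification (\thmref{yanose}), the translation $w\mapsto w+z$, and the correspondence between gauge relations and DGL morphisms out of $\lasu$, which you defer to a lemma --- it is precisely \propref{yanoses} of the paper, so nothing you quote is unavailable. For family (II) the two arguments agree: the paper's isomorphism $f(a)=a-z$, $f(b)=b-z$, $f(x)=x$ is your $(A,B,X)$ read in the inverse direction. For family (I), however, the paper argues in two steps: \propref{yanoses} (plus the change of orientation $\gamma$ when $z=b$) yields an automorphism of the underlying Lie algebra carrying $\partial_a$ to $\partial_z$, reducing everything to the single case $z=a$; then an explicit isomorphism $(\lasu,\partial_a)\cong\lasu$ is built by hand on the generators $c=-a$, $w=b-a$ via $\varphi(c)=a$, $\varphi(w)=a-e^{\ad_x}(b)$, $\varphi(x)=-x$, whose verification requires the series identity (\ref{ultima}). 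You instead handle family (I) by the very same translation mechanism as family (II), translating the reference pair $(0,\,x\,\cG\,0)$ of the \emph{opposite} gauge class rather than $(a,b)$; the observation that makes this work --- and which the paper never states, as it only uses that translation preserves Maurer--Cartan elements --- is your equivariance formula $(g\cdot_{\partial_z}w)+z=g\cdot_{\partial}(w+z)$, which transports the connecting element $x$ for free, so that \propref{yanoses} applies verbatim and invertibility of the linear part (your determinants, which check out) finishes the proof. Your route buys uniformity and dispenses with any Bernoulli-series computation; the paper's route buys explicit formulas and the intermediate isomorphisms $(\lasu,\partial_a)\cong(\lasu,\partial_b)\cong\lasu$ together with identity (\ref{ultima}), which it reuses afterwards in \propref{nueva}. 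Two immaterial blemishes in your write-up: the sign of the family-(I) determinant depends on the ordering of $(A,B)$ (as $\det(A\,|\,B)$ it is $+1$, not $-1$), and by equivariance the connecting element can be taken to be exactly $x$, so the detour through an unknown $\lambda x$ and the argument that $\lambda\neq 0$ are unnecessary.
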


\section{Preliminaries}

Recall that a  {graded Lie algebra} is a $\mathbb Z$-graded vector space $L=\oplus_{p\in\bz}L_p$ endowed with a bilinear  { Lie bracket} satisfying  antisymmetry  $[x,y] = -(-1)^{\vert x\vert \vert y\vert} [y,x]$ and Jacobi identity
$$(-1)^{\vert x\vert \vert z\vert} [x,[y,z]] + (-1)^{\vert y\vert \vert x\vert} [y, [z,x]] + (-1)^{\vert z\vert  \vert y\vert} [z,[x,y]]  = 0\,.$$
Here $\vert x\vert$ denotes the degree of $x$.
A \emph{differential graded Lie algebra} or DGL  is a graded Lie algebra $L$ together with a linear derivation $\partial$  of degree $-1$ such that $\partial^2= 0$.

A Maurer-Cartan element of a given DGL is an element $z\in L_{-1}$ satisfying $\partial z+ \frac{1}{2}[z,z] = 0$. We denote by ${\MC}(L)$ the set of Maurer-Cartan elements. These are preserved by DGL morphisms.  Given  $z\in {\MC}(L)$, the perturbed derivation $\partial_z=\partial+\ad_z$ is again a differential on $L$.

The \emph{completion} $\widehat{L}$ of a graded Lie algebra $L$ is the projective limit
$$\widehat{L} = \varprojlim_n L/L^n$$
where $L^1 = L$ and for $n\geq 2$, $L^n = [L, L^{n-1}]$. A Lie algebra $L$ is called \emph{complete} if $L$ is isomorphic to its completion. The completion of the free lie algebra generated by the graded vector space $V$ is denoted by $\widehat\lib(V)$.

Given $L$ a complete DGL, the {\em gauge action} of $L_0$ on ${\MC}(L)$ determines an equivalence relation among Maurer-Cartan elements defined as follows (see for instance \cite[\S4]{mane} or Proposition \ref{despues} below): given $x\in L_0$ and $z\in {\MC}(L)$,
$$x\,\cG\, z=e^{\ad_x}(z)-\frac{e^{\ad_x}-1}{\ad_x}(\partial x).$$
Here and from now on, the integer $1$ inside an operator will denote the identity. Explicitly,
$$
x\,\cG\, z=\sum_{i\ge0}\frac{\ad_x^i(z)}{i!}-\sum_{i\ge0}\frac{\ad_x^{i}(\partial x)}{(i+1)!}.
$$
The \emph{Deligne groupoid} of $L$ has  $\mc(L)$ as objects, and elements $x\in L_0$ as arrows from   $x\,\cG\,z$  to $z$.
Geometrically \cite{kont,LS}, interpreting Maurer-Cartan elements as points in a space, one thinks of $x$ as a flow taking   $x\,\cG\, z$ to $z$ in unit time. In topological terms, the points $z$ and $x\,\cG\, z$ are in the same path component.

 \begin{definition}\label{def:LSconstruction}\label{sec:LScylinder}\cite{LS}
\emph{The Lawrence-Sullivan interval} is the complete free DGL
$${\lasu} = (\widehat{\mathbb L} (a,b,x),\partial),$$
in which $a$ and $b$ are Maurer-Cartan elements, $x$ is of degree~0 and
\begin{equation}\label{equa:dxLSinterval}
\partial x = \ad_xb + \sum_{n=0}^\infty \frac{B_n}{n!}\,  \ad_x^n (b-a) = \ad_xb + \frac{\ad_x}{e^{\ad_x}-1} (b-a)\,
\end{equation}
 where $B_n$ are the Bernoulli numbers. Using the identity
$$
\left( \frac{-x}{e^{-x}-1}\right) = x+ \left( \frac{x}{e^x-1}\right),
$$
we may also write
\begin{equation}\label{equa:dxLSinterval2}
\partial x =  \ad_xa + \frac{\ad_{-x}}{e^{-\ad_x}-1} (b-a).
\end{equation}
Moreover, the differential $\partial$ in $\lasu$ is the only one for which $a$ and $b$ are Maurer-Cartan elements and either, its linear part $\partial_1$ satifies $\partial_1 x=b-a$ \cite[Thm. 1.4]{bufemutan}, or else $x\,\cG\,b=a$ \cite[Thm. 1]{LS}, \cite{patan}.

The {\em change of orientation} of $\lasu$ is the automorphism
$$
\gamma\colon\lasu \stackrel{\cong}{\longrightarrow}\lasu,\qquad\gamma(a)=b,\quad\gamma(b)=a,\quad\gamma(x)=-x.
$$
In fact, using (\ref{equa:dxLSinterval}) and (\ref{equa:dxLSinterval2}), one easily sees that $\gamma$ commutes with the differential.
\end{definition}

To be self-contained we give  a short proof, using the LS-interval, of the fact that the gauge action preserves Maurer-Cartan elements.

\begin{proposition}\label{despues} Let $(L,d)$ be a complete DGL, $z\in {\MC}(L)$ and $x\in L_0$. Then, $x\, \cG\, z$ is a Maurer-Cartan element.
\end{proposition}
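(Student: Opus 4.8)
The plan is to realize $x\,\cG\,z$ as the image of the Maurer--Cartan generator $a\in\lasu$ under a suitable morphism into $L$, and then to extract the Maurer--Cartan condition from the identity $\partial^2=0$ in $\lasu$. Concretely, write $w=x\,\cG\,z$ and define a morphism of graded Lie algebras $\phi\colon\lasu\to L$ on generators by $\phi(x)=x$, $\phi(b)=z$ and $\phi(a)=w$; freeness of $\widehat\lib(a,b,x)$ together with completeness of $L$ guarantees that $\phi$ extends (continuously) to a well-defined map. Note that $\phi$ is \emph{a priori} only a morphism of graded Lie algebras, not of DGLs, since the two differentials have not yet been compared.

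First I would check that $\phi$ intertwines the differentials on the generators $b$ and $x$. On $b$ this is immediate: $\phi(\partial b)=\phi(-\tfrac12[b,b])=-\tfrac12[z,z]=dz$, because $z\in\mc(L)$. On $x$ it is a direct substitution: using $\partial x=\ad_xb+\frac{\ad_x}{e^{\ad_x}-1}(b-a)$ one gets $\phi(\partial x)=\ad_xz+\frac{\ad_x}{e^{\ad_x}-1}(z-w)$, and plugging in $w=e^{\ad_x}z-\frac{e^{\ad_x}-1}{\ad_x}(dx)$ simplifies $\frac{\ad_x}{e^{\ad_x}-1}(z-w)$ to $-\ad_xz+dx$, whence $\phi(\partial x)=dx=d\phi(x)$. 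Thus $\phi$ is a chain map on $b$ and on $x$; the only generator left is $a$, and on it the chain-map defect is exactly the Maurer--Cartan obstruction of $w$ that we wish to annihilate.

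To control that defect I would introduce $\theta=d\circ\phi-\phi\circ\partial$, a degree $-1$ map which a routine sign check shows to be a $\phi$-derivation, i.e. $\theta[\alpha,\beta]=[\theta\alpha,\phi\beta]+(-1)^{|\alpha|}[\phi\alpha,\theta\beta]$. Hence $\theta$ is completely determined by its values on generators, namely $\theta(x)=0$, $\theta(b)=0$ and $\theta(a)=dw+\tfrac12[w,w]$. Now I evaluate $\theta(\partial x)$ in two ways. On one hand $\theta(\partial x)=d\phi(\partial x)-\phi(\partial^2x)=d(dx)-\phi(0)=0$, using the previous step together with $d^2=0$ and $\partial^2=0$. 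On the other hand, since $\theta$ vanishes on $x$ and $b$, only the $a$-linear summand $-\frac{\ad_x}{e^{\ad_x}-1}(a)$ of $\partial x$ contributes, and the $\phi$-derivation rule (with $|x|=0$) yields $\theta(\partial x)=-\frac{\ad_x}{e^{\ad_x}-1}\bigl(\theta(a)\bigr)$.

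Comparing the two computations gives $\frac{\ad_x}{e^{\ad_x}-1}\bigl(dw+\tfrac12[w,w]\bigr)=0$. The main, and essentially only, remaining obstacle is a formal one: the operator $\frac{\ad_x}{e^{\ad_x}-1}=1-\tfrac12\ad_x+\cdots$ has the identity as leading term, hence is invertible on the complete Lie algebra $L$. Applying its inverse forces $dw+\tfrac12[w,w]=0$, i.e. $w=x\,\cG\,z\in\mc(L)$, as claimed. The points I expect to require the most care in a full write-up are the verification that $\theta$ is a genuine $\phi$-derivation (so that it is both determined by and computable from its generator values) and the bookkeeping showing that only the $a$-linear part of $\partial x$ survives under $\theta$.
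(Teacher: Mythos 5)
Your proof is correct and takes essentially the same route as the paper: both realize $x\,\cG\,z$ as the image of the Maurer--Cartan generator under a map from the LS-interval, play $d^2=0$ against $\partial^2=0$ to obtain $\frac{\ad_x}{e^{\ad_x}-1}\bigl(dw+\tfrac12[w,w]\bigr)=0$ (the paper's displayed identity $0=-w+\tfrac12[x,w]-\sum_{n\ge 2}\tfrac{B_n}{n!}\ad_x^n(w)$ is exactly this equation), and then kill the obstruction by a completeness argument. Your final step --- inverting the operator $\frac{\ad_x}{e^{\ad_x}-1}$ using that $\ad_x$ raises the bracket-length filtration of the complete Lie algebra $L$ --- is a cleaner rendering of the paper's decomposition of $w$ by the number of occurrences of $x$, but in substance it is the same filtration argument.
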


\begin{proof} Write $c=x\,\cG\, z$. The formula for the gauge action  can be written
$$dx= \ad_xz + \frac{{\ad}_x}{e^{{\ad}_x}-1} (z- c).$$
Then $d^2x= 0$ implies that $w= dc+\frac{1}{2}[c,c]$ belongs to the ideal generated by $x$. Suppose $w\neq 0$ then $w= w_r+w_r'$ with $w_r$ a non zero linear combination of Lie brackets containing $r$ times $x$,
and $w_r'$ a linear combination of Lie brackets containing at least $r+1$ times $x$. If we denote by $\partial$ the usual differential on the LS-interval on $ \widehat{\mathbb L}(c,z,x)$, then $dx=\partial x$, $dz=\partial z$. From $d^2=0$ and $\partial^2=0$ we deduce
$$0 = -w+ \frac{1}{2}[x,w] - \sum_{n\geq 2} \frac{B_n}{n!} {\ad}_x^n (w).$$
This implies that $w_r= 0$.  Therefore $w = 0$ and $c=x\, \cG\,z$ is a Maurer-Cartan element.\end{proof}

\section{The Deligne groupoid of $\lasu$}

We first recall the Lawrence-Sullivan model   of the subdivision of the interval. Let $(\libc(a_0,a_1, a_2,x_1,x_2),\partial)$ be two glued LS-intervals. That is, $a_0, a_1$ and $a_2$ are Maurer-Cartan elements, $\partial x_1 = \ad_{x_1}a_1 + \frac{ \ad_{x_1}}{e^{\ad_{x_1}}-1} (a_1-a_0)$ and
  $\partial x_2 = \ad_{x_2}a_2 + \frac{ \ad_{x_2}}{e^{\ad_{x_2}}-1} (a_2-a_1)$.

 \begin{theoremb} \label{prop:subdivision} {\rm \cite[Thm. 2]{LS}}
 The map
 $$\gamma\colon (\hL(a,b,x),\partial) \longrightarrow(\hL(a_0,a_1,a_2, x_1, x_2),\partial),$$
$\gamma(a)= a_0$, $\gamma(b)= a_2$ and $p(x) = x_1*x_2$, is a DGL morphism.
 \end{theoremb}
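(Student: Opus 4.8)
The plan is to use that, $\gamma$ being prescribed on free generators, it automatically extends to a morphism of (completed) graded Lie algebras; the whole problem then reduces to checking $\gamma\circ\partial=\partial\circ\gamma$ on the three generators $a$, $b$, $x$, whose images are $a_0$, $a_2$ and $x_1*x_2$. On $a$ and $b$ this is immediate from the Maurer--Cartan condition: $\partial a=-\frac12[a,a]$ forces $\gamma(\partial a)=-\frac12[a_0,a_0]=\partial a_0=\partial(\gamma a)$, and likewise for $b$ and $a_2$. All the content is thus concentrated on the generator $x$.

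For $x$ I would argue entirely through the gauge action. First I record the algebraic inversion of the gauge formula used in the proof of Proposition \ref{despues}: for any $w\in L_0$ and any Maurer--Cartan element $z$,
$$\partial w=\ad_w z+\frac{\ad_w}{e^{\ad_w}-1}\bigl(z-w\,\cG\,z\bigr),$$
obtained by solving $w\,\cG\,z=e^{\ad_w}(z)-\frac{e^{\ad_w}-1}{\ad_w}(\partial w)$ for $\partial w$ and using $\frac{t\,e^{t}}{e^{t}-1}=t+\frac{t}{e^{t}-1}$. Read for the two glued intervals, the prescribed differentials $\partial x_1$ and $\partial x_2$ are exactly the assertions
$$x_1\,\cG\,a_1=a_0,\qquad x_2\,\cG\,a_2=a_1.$$
Applying the same inversion once more, now with $w=x_1*x_2$ and $z=a_2$ (Maurer--Cartan), reduces the desired equality $\partial(x_1*x_2)=\gamma(\partial x)$ to the single statement $(x_1*x_2)\,\cG\,a_2=a_0$, since the right-hand side unfolds, $\gamma$ being a Lie map with $\gamma a=a_0$ and $\gamma b=a_2$, to precisely $\ad_{x_1*x_2}a_2+\frac{\ad_{x_1*x_2}}{e^{\ad_{x_1*x_2}}-1}(a_2-a_0)$.

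The remaining point, and the crux, is that gauge transformations compose according to the Baker--Campbell--Hausdorff product, i.e. the gauge operation is a left action of $(L_0,*)$ on $\mc(L)$:
$$(u*v)\,\cG\,z=u\,\cG\,(v\,\cG\,z).$$
Granting this, the computation closes in one line,
$$(x_1*x_2)\,\cG\,a_2=x_1\,\cG\,(x_2\,\cG\,a_2)=x_1\,\cG\,a_1=a_0,$$
which is exactly what was needed.

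I expect this last identity to be the main obstacle. It is the standard fact that the gauge operation is a genuine group action (see \cite{mane}), whose proof from scratch is a nontrivial manipulation of the exponential series together with the Baker--Campbell--Hausdorff formula. I would import it as a known result rather than re-derive it, being careful not to establish it through the two-glued-intervals model itself, which would be circular with the present statement. Everything else is formal: the reduction to generators, the triviality on $a$ and $b$, and the purely algebraic inversion of the gauge formula.
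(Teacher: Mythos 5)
Your argument is mathematically sound: the reduction to the generator $x$ (the cases of $a$ and $b$ being immediate from the Maurer--Cartan condition), the rewriting of the gauge formula as $\partial w=\ad_w z+\frac{\ad_w}{e^{\ad_w}-1}\bigl(z-w\,\cG\,z\bigr)$ --- which is exactly Proposition~\ref{yanoses} of the paper, whose proof is independent of the present theorem --- and the closing computation $(x_1*x_2)\,\cG\,a_2=x_1\,\cG\,(x_2\,\cG\,a_2)=x_1\,\cG\,a_1=a_0$ are all correct, provided the group-action property of the gauge operation is imported from an independent source such as \cite{mane} (extended from nilpotent to complete DGLs by passing to the limit over the quotients $L/L^n$). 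Be aware, however, that your route runs exactly opposite to the paper's. The paper offers no proof of this statement at all: it is quoted verbatim from \cite[Thm.~2]{LS}, and the paper then uses it, through precisely the translation you set up, to \emph{reprove} that the gauge operation is a group action of $(L_0,*)$ on $\MC(L)$. So in the paper's logical architecture the subdivision theorem is the primitive and the group action is the corollary, whereas for you the group action is the imported primitive and the subdivision theorem is the corollary; both directions are legitimate because the two statements are equivalent modulo the inversion formula, and you were right to flag the circularity danger --- your proof is non-circular exactly because Manetti's proof of the group action does not pass through glued LS intervals. What each approach buys: the direct Lawrence--Sullivan proof yields, as a bonus, a new Lie-model proof of the deformation-theoretic fact that gauge transformations compose by Baker--Campbell--Hausdorff, while your approach obtains the theorem cheaply but forfeits that application --- were your proof spliced into the paper, the sentence following the theorem (``which reproves that the gauge action is in fact a group action'') would become circular and would have to be removed.
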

 Here, $*$ denotes the Baker-Campbell-Hausdorff product. This theorem translates to the statement
 $$(x_1*x_2)\, \cG\, a_2 = a_0 = x_1\, \cG\, (x_2\, \cG\, a_2),$$
which reproves that the gauge action is in fact a group action of $(L_0, *)$ on ${\MC}(L)$.

\begin{proposition}\label{unico} A Maurer-Cartan element of $\lasu$ is completely determined by its linear part.
\end{proposition}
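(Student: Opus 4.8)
The plan is to prove the equivalent statement that two Maurer--Cartan elements of $\lasu$ with the same linear part must coincide, using the bracket-length filtration together with the injectivity of the linear part $\partial_1$ of the differential in degree $-1$. Let $z,z'\in\mc(\lasu)$ have equal linear parts and set $w=z-z'$; since the linear parts agree, $w$ lies in the ideal of brackets of length $\ge 2$. Write $w=\sum_{k\ge 2}w_k$ with $w_k$ homogeneous of bracket length $k$, and let $r$ be the least index with $w_r\neq 0$. Subtracting the two Maurer--Cartan equations and using that for odd elements $[z,z]-[z',z']=[w,z+z']$, I get $\partial w=-\tfrac12[w,z+z']$, whose right-hand side has bracket length $\ge r+1$. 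Splitting $\partial=\partial_1+\partial_{\ge 2}$, where $\partial_1$ (given by $x\mapsto b-a$ and $a,b\mapsto 0$) preserves bracket length while $\partial_{\ge 2}$ strictly raises it, and comparing the bracket-length-$r$ components, every term except $\partial_1 w_r$ lands in length $>r$. Hence $\partial_1 w_r=0$: the lowest component of $w$ is a $\partial_1$-cycle of degree $-1$ and bracket length $r\ge 2$.

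The crux, and the step I expect to be the main obstacle, is to show that $\partial_1$ has no nonzero cycle of degree $-1$ and bracket length $\ge 2$, i.e. that $\ker\bigl(\partial_1\colon\lasu_{-1}\to\lasu_{-2}\bigr)$ reduces to the length-one space $\langle a,b\rangle$. I would compute the homology of the free DGL $(\hL(a,b,x),\partial_1)$ after the change of generators $c=b-a$, which makes the linear differential read $\partial_1 x=c$ and $\partial_1 a=\partial_1 c=0$; then the generating chain complex has homology $\langle a\rangle$, concentrated in degree $-1$, the pair $(x,c)$ being acyclic. Passing to universal enveloping algebras identifies $U\hL(a,c,x)$ with the (completed) tensor algebra, so the K\"unneth theorem gives $H_*(U\hL,\partial_1)\cong T(\langle a\rangle)$, and the Poincar\'e--Birkhoff--Witt theorem over $\bq$ then forces $H_*(\hL(a,c,x),\partial_1)\cong\hL(\langle a\rangle)$, whose degree $-1$ part is one-dimensional, spanned by the class of $a$. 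Since the only degree-zero Lie monomials are the multiples of $x$, the degree $-1$ boundaries are exactly $\langle\partial_1 x\rangle=\langle c\rangle$; combined with the homology this gives, in degree $-1$, $\ker\partial_1=\langle c\rangle\oplus\langle a\rangle=\langle a,b\rangle$, entirely of bracket length one. Thus $\partial_1$ is injective on brackets of length $\ge 2$ in degree $-1$.

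The proof then closes at once: $\partial_1 w_r=0$ together with this injectivity forces $w_r=0$, contradicting the minimality of $r$ unless $w=0$, that is $z=z'$. The delicate point is entirely the homology computation: one must check that \emph{both} the homology $H_{-1}$ and the space of boundaries sit in bracket length one---the latter because the whole of the degree-zero part is the line $\langle x\rangle$---so that no nonzero but exact cycle of length $\ge 2$ can occur. As a self-contained Lie-theoretic alternative avoiding enveloping algebras, I could instead apply Lazard elimination to exhibit the degree $-1$ part as the free module with basis $\{\ad_x^n a,\ \ad_x^n c\}_{n\ge 0}$, with one dimension in each bracket length, and verify directly that $\partial_1$ does not annihilate $\ad_x^n a$ or $\ad_x^n c$ for $n\ge 1$.
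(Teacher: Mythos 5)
Your argument is correct, and its outer skeleton is in fact the same as the paper's: both filter $\lasu$ by bracket length, use the Maurer--Cartan equations to show that the lowest-length component of $z-z'$ is annihilated by the length-preserving part of the differential, and conclude from injectivity of that linearized differential on degree $-1$ elements of length $\geq 2$. Where you genuinely diverge is in how this injectivity is established. The paper argues by an explicit leading-term computation: writing the length-$(r+1)$ components as $\lambda_r\,\ad_x^r(a)+\mu_r\,\ad_x^r(b)$, it computes in the enveloping algebra $U\lasu$, modulo the ideal of words containing $x$ at least $r$ times, that $\partial(\ad_x^r a)$ has leading word $(b-a)ax^{r-1}$ plus words beginning with $x$ (and similarly for $b$), and reads off that the coefficients must agree. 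You instead prove the sharper structural statement $\ker(\partial_1)\cap\lasu_{-1}=\langle a,b\rangle$ by computing $H_*(\hL(a,c,x),\partial_1)\cong\hL(\langle a\rangle)$ via K\"unneth and PBW/Milnor--Moore, and then counting: the degree $-1$ boundaries are exactly $\bq(b-a)$ because $\lasu_0=\bq x$, so the cycles form precisely the two-dimensional space $\langle a,b\rangle$. Your route is more conceptual and yields more (the full kernel of $\partial_1$, not just the two coefficients needed), at the price of invoking the standard homology theorem for free DGLs with linear differential; the paper's route is more elementary and self-contained. One point you should make explicit: since $\partial_1$ preserves bracket length, the completion causes no difficulty---$\ker\partial_1$ on $\hL$ is the product of its restrictions to the finite-dimensional length-homogeneous pieces, so the K\"unneth/PBW argument, which is usually stated for uncompleted $\lib(V)$ and $T(V)$, does apply.

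Two small corrections to your closing alternative. The degree $-1$ part of $\lasu$ has basis $\{\ad_x^n a,\ \ad_x^n c\}_{n\geq 0}$, hence is \emph{two}-dimensional in each bracket length $n+1$, not one; and it does not suffice to check that $\partial_1$ fails to annihilate $\ad_x^n a$ and $\ad_x^n c$ separately---you need injectivity of $\partial_1$ on their two-dimensional span. This stronger claim is still easy: by Lazard elimination, $\partial_1(\ad_x^n a)$ is a combination, with nonzero binomial coefficients, of the brackets $[\ad_x^j c,\ad_x^{n-1-j}a]$, while $\partial_1(\ad_x^n c)$ involves only the brackets $[\ad_x^j c,\ad_x^{n-1-j}c]$; these are disjoint families of basis elements of the length-two part of the free Lie algebra on the generators $\ad_x^j a,\ \ad_x^j c$, so no nontrivial linear combination can vanish. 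Your main, homological route already handles linear combinations correctly via the dimension count, so this affects only the sketch at the end.
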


\begin{proof}  Suppose $\omega$ and $\omega'$ are Maurer-Cartan elements with the same linear part. We write $\omega = \sum_{i\geq 1} \omega_i$ and $\omega' = \sum_{i\geq 1}\omega_i'$, where $\omega_i$ and $\omega_i'$ belong to the words of length $i$. Suppose by induction that $\omega_i= \omega_i'$ for $i\leq r$. We write
$$
\begin{array}{l}
\omega_{r+1} = \lambda_r\, {\ad}^r(x)(a) + \mu_r\, {\ad}^r(x)(b),\\
\omega_{r+1}' = \lambda_r'\, {\ad}^r(x)(a) + \mu_r'\, {\ad}^r(x)(b),
\end{array}$$
with $\lambda_r, \mu_r, \lambda_r', \mu_r'\in \mathbb Q$,
and denote by $K$ the ideal generated by the words containing at least $r$ times the variable $x$. Since $\omega$ and $\omega'$ are Maurer-Cartan elements, $$\partial(\omega -\omega') = -\frac{1}{2} \left( [\omega, \omega]-[\omega', \omega']\right)\in K.$$ Next, a direct computation shows that, modulo $K$ and in the universal enveloping algebra $U\lasu$,
$$
\partial(\ad_x^ra)=(b-a)ax^{r-1}+x\Gamma_a,\quad \Gamma_a\in U\lasu.
$$
From here we deduce that, also modulo $K$ and in $U\lasu$,
$$\partial\omega - \partial\omega' = (\lambda_r-\lambda_r')(b-a)ax^{r-1} + (\mu_r-\mu_r')(b-a)bx^{r-1} + x\Gamma,\quad\Gamma\in \lasu.
 $$
 Therefore, $\lambda_r= \lambda_r'$ and $\mu_r= \mu_r'$.
\end{proof}

We now prove Theorem \ref{main}:
\begin{theoremb}\label{yanose}
 The set of Maurer-Cartan elements in $\lasu$ decomposes into two families:

\begin{itemize}

\item[(I)] Maurer-Cartan elements of the form $\lambda a+ (1-\lambda )b + \Omega$, where $\lambda\in \mathbb Q$ and $\Omega$ is a decomposable element.

\item[(II)]   Maurer-Cartan elements of the form $\mu(a-b) + \Omega$, where $\mu \in \mathbb Q$ and $\Omega$ is a decomposable element.

    \end{itemize}

Moreover, two Maurer-Cartan elements $u$ and $v$ are gauge equivalent if and only if they belong to the same family.
\end{theoremb}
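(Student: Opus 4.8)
The plan is to exploit two structural facts about $\lasu$: that in low degrees the graded Lie algebra is very small, and that by Proposition \ref{unico} a Maurer-Cartan element is pinned down by its linear part. Since every bracket with $p$ letters among $\{a,b\}$ and any number of letters $x$ sits in degree $-p$, and since $[x,x]=0$ as $x$ is even, I first record that $\lasu_0=\mathbb Q\,x$ and that $\lasu_{-1}$ is spanned by the elements $\ad_x^i(a)$ and $\ad_x^i(b)$, $i\ge 0$ (exactly the basis already used in the proof of Proposition \ref{unico}). Thus a degree $-1$ element is $z=\sum_{i\ge0}\lambda_i\ad_x^i(a)+\mu_i\ad_x^i(b)$ with linear part $\lambda_0 a+\mu_0 b$, and the only available gauge transformations are those by $tx$ with $t\in\mathbb Q$.

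Next I would extract the invariant $\lambda_0+\mu_0$ from the quadratic part of the Maurer-Cartan equation. Writing $\partial=\partial_1+\partial_2+\cdots$ by bracket length, one has $\partial_1 x=b-a$, while the Maurer-Cartan condition on the generators forces $\partial a=-\tfrac12[a,a]$ and $\partial b=-\tfrac12[b,b]$. Computing the length-$2$ component of $\partial z+\tfrac12[z,z]=0$ and comparing coefficients on the linearly independent brackets $[a,a]$, $[a,b]$, $[b,b]$ of the degree $-2$ part yields $\lambda_1=\tfrac12\lambda_0(\lambda_0-1)$, $\mu_1=\tfrac12\mu_0(1-\mu_0)$ and, after substitution into the $[a,b]$-equation, the single relation $(\lambda_0+\mu_0)(\lambda_0+\mu_0-1)=0$. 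Hence every Maurer-Cartan element has $\lambda_0+\mu_0\in\{0,1\}$; writing $\Omega$ for the decomposable remainder $z-\lambda_0 a-\mu_0 b$ sorts the set into family (I) (sum $1$, linear part $\lambda a+(1-\lambda)b$) and family (II) (sum $0$, linear part $\mu(a-b)$). This is the computational heart of the argument and the step I expect to be the main obstacle, since it relies on the explicit low-order terms of $\partial$ together with the linear independence of $[a,a],[a,b],[b,b]$.

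Finally I would settle the gauge statement using the one-parameter nature of the group. A short computation with the gauge formula shows that the linear part of $(tx)\,\cG\,z$ is $(\lambda_0+t)a+(\mu_0-t)b$, so $\lambda_0+\mu_0$ is a gauge invariant and families (I) and (II) cannot be equivalent. Conversely, starting from the known Maurer-Cartan elements $b$ (for which $x\,\cG\,b=a$) and $0$, the elements $(tx)\,\cG\,b$ and $(tx)\,\cG\,0$ run through all linear parts of family (I), respectively of family (II), as $t$ ranges over $\mathbb Q$; by Proposition \ref{despues} these are Maurer-Cartan elements, and by Proposition \ref{unico} they therefore exhaust the corresponding family. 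Thus each family is a single gauge orbit, two Maurer-Cartan elements are gauge equivalent precisely when they lie in the same family, and in passing every admissible linear part is shown to be realized.
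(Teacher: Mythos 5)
Your proof is correct, and its computational core coincides with the paper's: your length-two analysis of $\partial z+\tfrac12[z,z]=0$, giving $\lambda_1=\tfrac12\lambda_0(\lambda_0-1)$, $\mu_1=\tfrac12\mu_0(1-\mu_0)$ and $(\lambda_0+\mu_0)(\lambda_0+\mu_0-1)=0$, is exactly the paper's system $\lambda+2\alpha=\lambda^2$, $\mu-2\beta=\mu^2$, $2(\beta-\alpha)=2\lambda\mu$ leading to $(\lambda+\mu)^2=\lambda+\mu$; and the within-family statement is handled identically in both texts, via the translates $(tx)\,\cG\,b$ and $(tx)\,\cG\,0$ together with Propositions \ref{despues} and \ref{unico}. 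The genuine difference is how the two families are separated. The paper argues by contradiction through the subdivision Theorem \ref{prop:subdivision}: a gauge equivalence between families (I) and (II) would yield a DGL morphism $f\colon\lasu\to\lasu$ with $f(a)=a$, $f(b)=0$, $f(x)=\lambda x$, and then $f\partial x-\partial fx$ has nonzero linear part, a contradiction. You instead make explicit the structural fact $\lasu_0=\mathbb{Q}\,x$ (which the paper uses only tacitly when it writes $f(x)=\lambda x$) and observe that the linear part of $(tx)\,\cG\,z$ is $(\lambda_0+t)a+(\mu_0-t)b$, because the linear part of $e^{\ad_{tx}}(z)$ is that of $z$ while the linear part of $\frac{e^{\ad_{tx}}-1}{\ad_{tx}}\bigl(\partial(tx)\bigr)$ is $t(b-a)$; hence the coefficient sum $\lambda_0+\mu_0$ is a gauge invariant and the families can never meet. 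Your route is more elementary and self-contained at this step, avoiding the subdivision morphism and exhibiting an explicit invariant that explains \emph{why} the separation holds; what you still share with the paper is the implicit reliance on the gauge action being a genuine group action of $(\lasu_0,*)$ (which the paper derives from Theorem \ref{prop:subdivision}) when you call each family a single ``orbit'', so that step cannot be dropped from either argument.
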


\vskip 0.35cm
\begin{picture}(0,0)(-100,0)
\qbezier(0,0)(100,0)(200,0) \qbezier[10](200,0)(210,0)(220,0) \qbezier[10](-20,0)(-10,0)(0,0)
\put(50,-2.5){$\bullet$}\put(150,-2.5){$\bullet$}
\put(50,5){$a$}\put(150,5){$b$}

\qbezier(0,-30)(100,-30)(200,-30) \qbezier[10](200,-30)(210,-30)(220,-30)\qbezier[10](-20,-30)(-10,-30)(0,-30)
\put(100,-32.5){$\bullet$}
\put(100,-42){$0$}
\end{picture}

\vskip 1.75cm

\begin{proof}  Given $\lambda \in \mathbb Q$,  $(\lambda x)\,{\cal G}\,b  $
  is   a Maurer-Cartan element whose linear part is $\lambda a + (1-\lambda) b$.
On the other hand, for $\mu\in \mathbb Q$,   the linear part of $(\mu x) \,{\cal G}\, 0$ is $\mu a -\mu b$. This proves the existence claim and now we show that a Maurer-Cartan $\omega$ always belongs to one of the two families. Write $$\omega = \lambda a+ \mu b + \alpha [x,a] + \beta [x,b] + \omega',\quad \lambda,\mu,\alpha,\beta\in\bq$$
with $\omega'$ in the words of length at least three. Then\red{,} looking at the coefficients of $[a,a], $[b,b] and $[a,b]$ in $\partial \omega + \frac{1}{2}[\omega, \omega]$, we find
$$\left\{
\begin{array}{l} \lambda + 2 \alpha = \lambda^2,\\
\mu - 2\beta = \mu^2,\\
2(\beta-\alpha) = 2\lambda \mu.\end{array}\right.$$
Then, $(\lambda + \mu)^2= \lambda + \mu$, so $\lambda + \mu = 1$ or $\lambda + \mu = 0$, giving respectively the families (I) and (II).

We now prove the second statement. By Proposition \ref{unico}, the only Maurer-Cartan element with linear part $\lambda a+(1-\lambda)b$ is $(\lambda x)\,{\cal G}\,b$ which is gauge equivalent to $b$ by definition. Therefore,  any two elements of the family (I) are gauge equivalent.
In the same way, $(\mu x)\,{\cal G}\, 0$ is the only Maurer-Cartan element whose linear part is $\mu(a-b)$ and it is gauge equivalent to $0$ by definition. Hence, any two elements of the family (II) are gauge equivalent.

Finally suppose that we can connect an element of the family (I) to an element of the family (II). By subdivision as in Theorem \ref{prop:subdivision} this implies the existence of a DGL morphism $f\colon {\lasu} \to {\lasu}$
with $f(a)=a$, $f(b) = 0$ and $f(x) = \lambda x$,
for some $\lambda$. But then the linear part of $f\partial x-\partial fx$ is nonzero which is impossible.

\end{proof}

\begin{example} In the family (II), the Maurer-Cartan elements corresponding to $a-b$ and $b-a$ are respectively $a - e^{\ad_x}(b)$ and $b - e^{-\ad_x}(a)$. In fact, taking into account the identity
 $$\displaystyle{\frac{e^{-t}-1}{-t}\cdot \frac{t}{e^t-1} = e^{-t}},$$
and  applying $\displaystyle{\frac{e^{-\ad_x}-1}{-\ad_x}}$ to  equation
 (\ref{equa:dxLSinterval}) we obtain
$$
\frac{e^{-\ad_x}-1}{-\ad_x} (\partial x) = b - e^{-\ad_x}(a).
$$
Changing the orientation, we get an equivalent identity,
\begin{equation}\label{ultima}
-\frac{e^{\ad_x}-1}{\ad_x} (\partial x) = a - e^{\ad_x}(b).
\end{equation}
These formulae readily implies that
$$
(-x)\,\cG\, 0=b-e^{-\ad_x}(a)\qquad\text{and}\qquad x\,\cG\, 0=a-e^{\ad_x}(b),
$$
and therefore,  they are both Maurer-Cartan elements of $\lasu$ with the required linear part.
\end{example}

\begin{proposition}\label{yanoses} Let $(L,d)$ be a DGL, $z,z'\in {\MC}(L)$ and $y\in L_0$. Then, the following assertions are equivalent:
  \begin{enumerate}
  \item[(1)] $y\, \cG\, z= z'$.
  \item[(2)] There exists a  DGL morphism, $\varphi : \lasu\to (L,d)$ with $\varphi (a) = z'$, $\varphi (b)= z$ and $\varphi (x) = y$.
  \end{enumerate}\end{proposition}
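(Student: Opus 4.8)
The plan is to observe that the differential of $\lasu$ is nothing but a repackaging of the single gauge relation $x\,\cG\,b=a$ (Definition \ref{def:LSconstruction}), and to push this relation across $\varphi$ in each direction. Everything hinges on two structural facts about the free object $\lasu=(\hL(a,b,x),\partial)$: first, a graded Lie algebra morphism out of it is uniquely determined by the images of $a,b,x$; and second, once $\varphi$ is a Lie morphism, the identity $d\varphi=\varphi\partial$ needs only be verified on the three generators, since $f:=d\varphi-\varphi\partial$ satisfies $f[\omega_1,\omega_2]=[f\omega_1,\varphi\omega_2]+(-1)^{|\omega_1|}[\varphi\omega_1,f\omega_2]$ and therefore vanishes on all brackets by an induction on word length. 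Throughout I treat $L$ as complete, as the gauge action presupposes, so that $\varphi$ extends continuously to the completion and the series $e^{\ad_y}$, $\frac{e^{\ad_y}-1}{\ad_y}$, $\frac{\ad_y}{e^{\ad_y}-1}$ all converge.

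For the implication $(2)\Rightarrow(1)$ I would apply the given morphism $\varphi$ to the defining relation $a=x\,\cG\,b=e^{\ad_x}(b)-\frac{e^{\ad_x}-1}{\ad_x}(\partial x)$ holding in $\lasu$. Being a Lie morphism compatible with the bracket-length filtration, $\varphi$ commutes with the convergent series, sending $e^{\ad_x}(b)\mapsto e^{\ad_y}(z)$ and $\frac{e^{\ad_x}-1}{\ad_x}(\partial x)=\sum_{i\ge0}\frac{\ad_x^i(\partial x)}{(i+1)!}\mapsto\sum_{i\ge0}\frac{\ad_y^i(dy)}{(i+1)!}=\frac{e^{\ad_y}-1}{\ad_y}(dy)$, where I used $\varphi(\partial x)=d\varphi(x)=dy$. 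Hence $z'=\varphi(a)=e^{\ad_y}(z)-\frac{e^{\ad_y}-1}{\ad_y}(dy)=y\,\cG\,z$, which is exactly (1).

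For $(1)\Rightarrow(2)$ I would let $\varphi$ be the unique graded Lie morphism with $\varphi(a)=z'$, $\varphi(b)=z$, $\varphi(x)=y$ (the degrees match, since $z',z\in L_{-1}$ and $y\in L_0$), and then check $\varphi\partial=d\varphi$ on generators. On $a$ this is $\varphi(\partial a)=-\tfrac12[z',z']=dz'$, i.e. the Maurer-Cartan equation for $z'$, and on $b$ it is the Maurer-Cartan equation for $z$; both hold by hypothesis. For $x$ I must verify $\varphi(\partial x)=dy$: applying $\varphi$ to (\ref{equa:dxLSinterval}) gives $\varphi(\partial x)=\ad_y z+\frac{\ad_y}{e^{\ad_y}-1}(z-z')$, while rewriting the hypothesis $z'=y\,\cG\,z$ as $dy=\frac{\ad_y}{e^{\ad_y}-1}\bigl(e^{\ad_y}(z)-z'\bigr)$ and using the power-series identity $\frac{te^{t}}{e^{t}-1}=t+\frac{t}{e^{t}-1}$ (the same identity used after (\ref{equa:dxLSinterval})) turns the right-hand side into precisely $\ad_y z+\frac{\ad_y}{e^{\ad_y}-1}(z-z')$. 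Thus $\varphi(\partial x)=dy=d\varphi(x)$, and $\varphi$ is a DGL morphism.

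The only genuinely delicate point is the bookkeeping on the generator $x$: one must match the Bernoulli-type series produced by $\partial x$ against the gauge series, which is exactly the identity relating (\ref{equa:dxLSinterval}) to the reformulation of $x\,\cG\,b=a$. Everything else — well-definedness of $\varphi$ on the free complete DGL, propagation of $d\varphi=\varphi\partial$ from generators to all of $\lasu$, and the automatic compatibility on the Maurer-Cartan generators $a$ and $b$ — is formal, so I expect the proof to be short once the gauge formula and this series identity are in hand.
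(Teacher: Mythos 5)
Your proof is correct and is essentially the paper's own argument, just written out in full: the paper's one-line proof consists precisely of the rewriting $dy=\ad_yz+\frac{\ad_y}{e^{\ad_y}-1}(z-z')$ of the gauge relation, which is simultaneously the image under $\varphi$ of the defining equation for $\partial x$ — exactly the identification your two directions carry out. The extra details you supply (freeness, checking $d\varphi=\varphi\partial$ only on generators, the Maurer--Cartan conditions handling $a$ and $b$, and the continuity/completeness caveat) are the steps the paper leaves implicit.
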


  \begin{proof} This follows directly from writing
  $$dy= \ad_yz + \frac{{\ad}_y}{e^{{\ad}_y}-1}(z-z').$$
  \end{proof}

We now prove Theorem \ref{main2}.

\begin{theoremb} Every perturbed LS-interval $(\widehat{\mathbb L}(a,b,x), \partial_z)$, with $z\in\mc(\mathfrak L)$, is  isomorphic to $\mathfrak L$.
\end{theoremb}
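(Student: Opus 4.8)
The plan is to reduce everything to a single \emph{gauge lemma}: for $z\in\mc(\lasu)$ and $y\in\lasu_0$, the Lie algebra automorphism $e^{\ad_y}$ intertwines the two perturbed differentials, i.e. $e^{\ad_y}\,\partial_z\,e^{-\ad_y}=\partial_{y\,\cG\,z}$, so that $e^{\ad_y}\colon(\widehat\lib(a,b,x),\partial_z)\to(\widehat\lib(a,b,x),\partial_{y\,\cG\,z})$ is a DGL isomorphism. First I would establish this identity. The quickest route is to differentiate the one-parameter family $\partial^{\,t}=e^{t\ad_y}\partial_z\,e^{-t\ad_y}$, observe that it stays of the form $\partial+\ad_{v_t}$ with $\dot v_t=\ad_y v_t-\partial y$ and $v_0=z$, and integrate to obtain $v_1=e^{\ad_y}(z)-\frac{e^{\ad_y}-1}{\ad_y}(\partial y)=y\,\cG\,z$. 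This is the only genuinely computational point, and it merely reproduces the gauge formula recorded just before \propref{despues}.

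With the lemma in hand, \thmref{yanose} does the bookkeeping. Every Maurer--Cartan element lies in family (I) or family (II), and within each family all elements are mutually gauge equivalent. Since $b$ lies in family (I) and $0$ lies in family (II), any $z$ in family (II) admits a $y$ with $y\,\cG\,z=0$, whence $(\widehat\lib(a,b,x),\partial_z)\cong(\widehat\lib(a,b,x),\partial_0)=\lasu$; and any $z$ in family (I) admits a $y$ with $y\,\cG\,z=b$, whence $(\widehat\lib(a,b,x),\partial_z)\cong(\widehat\lib(a,b,x),\partial_b)$. Thus the whole theorem reduces to the single isomorphism $(\widehat\lib(a,b,x),\partial_b)\cong\lasu$.

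For that last isomorphism I would \emph{not} search for a gauge transformation---none exists, since $b$ and $0$ lie in different gauge classes---but instead exhibit a new free generating set on which $\partial_b$ is \emph{literally} the Lawrence--Sullivan differential, and then invoke the uniqueness clause of \defref{def:LSconstruction}. The candidates are $\tilde a=-b$, $\tilde b=-e^{-\ad_x}(a)$ and $\tilde x=x$. Three things need checking: (i) their linear parts $-b,\,-a,\,x$ form a basis of the space of generators, so $\{\tilde a,\tilde b,\tilde x\}$ freely generates $\widehat\lib(a,b,x)$; (ii) $\tilde a$ and $\tilde b$ are Maurer--Cartan for $\partial_b$, which follows from the one-line identity ``$\omega$ is $\partial_b$-Maurer--Cartan $\iff\omega+b$ is $\partial$-Maurer--Cartan'' together with the computation $(-x)\,\cG\,0=b-e^{-\ad_x}(a)\in\mc(\lasu)$ from the Example above; and (iii) the linear part of $\partial_b\tilde x=\partial x+[b,x]$ equals $b-a$, which in the new coordinates reads $\tilde b-\tilde a$ since $b=-\tilde a$ and $a=-\tilde b$ at the linear level. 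By uniqueness, $\partial_b$ coincides with the LS differential on $\widehat\lib(\tilde a,\tilde b,\tilde x)$, so $a\mapsto\tilde a,\ b\mapsto\tilde b,\ x\mapsto\tilde x$ defines the sought DGL isomorphism $\lasu\xrightarrow{\cong}(\widehat\lib(a,b,x),\partial_b)$.

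The main obstacle is precisely this family-(I) step. The easy gauge argument connects $\partial_z$ only to $\partial_b$, and because families (I) and (II) are distinct gauge classes, no inner automorphism $e^{\ad_y}$ can finish the job; one must produce an honest ``outer'' change of generators. The delicate part is choosing $\tilde a,\tilde b$ so that they are simultaneously $\partial_b$-Maurer--Cartan, have linearly independent linear parts compatible with $x$, and realize the normalization $\partial_1\tilde x=\tilde b-\tilde a$. The shift identity relating $\partial_b$- and $\partial$-Maurer--Cartan elements, combined with \propref{unico}, is what makes this choice essentially forced.
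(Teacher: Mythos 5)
Your proof is correct, but it runs on genuinely different machinery, even though its skeleton (reduce both families to a single base perturbation, then remove that one by a change of free generators) parallels the paper's. For the reduction step the paper never states your conjugation lemma $e^{\ad_y}\,\partial_z\,e^{-\ad_y}=\partial_{y\,\cG\,z}$: it treats family (II) by the shift isomorphism $f(a)=a-z$, $f(b)=b-z$, $f(x)=x$ (resting on the same shift identity you use later), and it treats family (I) via Proposition \ref{yanoses}, which produces a $\partial$-preserving automorphism $f$ with $f(a)=z$, $f(b)=b$, $f(x)=\lambda x$, hence an isomorphism $(\lasu,\partial_a)\cong(\lasu,\partial_z)$ when $\lambda\neq0$, the change of orientation covering $\lambda=0$; so the paper's base case is $\partial_a$ while yours is $\partial_b$. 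Your inner-automorphism lemma buys a uniform one-line treatment of both families (it is the standard deformation-theoretic statement that gauge-equivalent Maurer--Cartan elements have isomorphic twisted differentials), at the cost of having to prove the conjugation formula; the paper stays entirely within tools it has already established in the text. For the base case, the paper checks by hand, using identity (\ref{ultima}), that $\varphi(c)=a$, $\varphi(w)=a-e^{\ad_x}(b)$, $\varphi(x)=-x$ defines a DGL isomorphism out of $(\lasu,\partial_a)$ written on the generators $c=-a$, $w=b-a$; you instead exhibit generators $\tilde a=-b$, $\tilde b=-e^{-\ad_x}(a)$, $\tilde x=x$ on which $\partial_b$ satisfies the LS axioms and invoke the uniqueness theorem of \cite{bufemutan} quoted in Definition \ref{def:LSconstruction}. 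That trades an explicit verification for an appeal to an external uniqueness result (fair game, since the paper cites it), and you correctly handle the one delicate point there: the normalization $\partial_1\tilde x=\tilde b-\tilde a$ must be read with respect to the new generating set, which is legitimate because the new generators' linear parts form a basis and the space of decomposables is intrinsic. Note finally that the two base-case arguments are computationally cognate: your Maurer--Cartan check for $\tilde b$ rests on the Example's identity $(-x)\,\cG\,0=b-e^{-\ad_x}(a)$, which is the mirror, under the change of orientation, of the identity (\ref{ultima}) the paper computes with.
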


\begin{proof}If the linear part of $z$ is $\lambda( a - b)$, then for each Maurer-Cartan element $c$ in ${\mathfrak L}$, $c-z$ is a Maurer-Cartan element for $\partial_z$. We then get an isomorphism
$ f\colon ({\mathfrak L}, \partial) \stackrel{\cong}{\longrightarrow} ({\mathfrak L}, \partial_z)$
defined by $f(a) = a-z$, $f(b)=b-z$ and $f(x) = x$.

If $z=(\lambda x)\, \cG\, b$ and $\lambda \neq 0$, then by Proposition \ref{yanoses}, there is an isomorphism $f\colon  {\mathfrak L} \to {\mathfrak L}$
with $f(a) =z$, $f(b)=b$ and $f(x) = \lambda x$. In particular $f$ induces an isomorphism $ ({\mathfrak L}, \partial_a) \stackrel{\cong}{\longrightarrow} ({\mathfrak L}, \partial_z)$.
When $\lambda = 0$, the change of orientation provides an isomorphism $\gamma \colon ({\mathfrak L}, \partial_a)\stackrel{\cong}{\longrightarrow} ({\mathfrak L}, \partial_b)$.

Finally, we show that $({\mathfrak L}, \partial_a)$ is isomorphic to $({\mathfrak L}, \partial)$.
Observe that $-a$ and $b-a$ are Maurer-Cartan elements in $(\widehat{\mathbb L}(a,b,x),\partial_a)$. Write $c= -a$ and $w= b-a$ so that $(\widehat{\mathbb L}(a,b,x),\partial_a)\cong(\widehat{\mathbb L}(c,w,x),\partial')$ with $c$ and $w$ Maurer-Cartan elements and $\partial' x = \frac{-{\ad}_x}{e^{-{\ad}_x}-1}(w)$.
Remark now that the morphism of Lie algebras
$$\varphi \colon (\widehat{\mathbb L}(c,w,x), \partial') \to \lasu$$
defined by $\varphi (c)= a$, $\varphi (w) = a-e^{{\ad}_x}b = x\, \cG\, 0$ and $\varphi (x) = -x$ is a DGL isomorphism. This implies the existence of an isomorphism $(\lasu, \partial_a) \cong (\lasu, \partial)$.
\end{proof}

We finish by presenting another isomorphism class of the LS-interval: consider the quotient of $\lasu$ by the ideal generated by $a$ to obtain $(\widehat\lib(b,x),\partial')$ where
$$
\partial'x=\ad_x(b)+\frac{\ad_x}{e^{\ad_x}-1} (b)=\frac{-\ad_x}{e^{-\ad_x}-1}(b).
$$
The coproduct of this quotient with $\lib(a)$ gives back the complete Lie algebra $\widehat\lib(a,b,x)$ with a different, not perturbed, differential $\partial'$.

\begin{proposition}\label{nueva}
The LS-model $\lasu$ and  $(\widehat\lib(a,b,x),\partial')
$ are isomorphic.
\end{proposition}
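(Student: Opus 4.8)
The plan is to write down an explicit isomorphism on generators and to verify compatibility with the differentials, the key input being the Maurer--Cartan element $x\,\cG\,0$ computed in the Example. First I would record the structure of $(\widehat\lib(a,b,x),\partial')$: here $a$ and $b$ are Maurer--Cartan elements, so $\partial' a=-\frac{1}{2}[a,a]$ and $\partial' b=-\frac{1}{2}[b,b]$, while $\partial' x=\frac{-\ad_x}{e^{-\ad_x}-1}(b)$, the variable $a$ intervening only through $\partial' a$. Up to renaming $c\mapsto a$ and $w\mapsto b$, this is precisely the DGL $(\widehat\lib(c,w,x),\partial')$ appearing in the proof of Theorem \ref{main2}, so the isomorphism built there can be reused; I would spell it out for completeness.

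I would then define the morphism of complete Lie algebras
$$\Phi\colon (\widehat\lib(a,b,x),\partial')\longrightarrow\lasu,\qquad \Phi(a)=a,\quad \Phi(b)=x\,\cG\,0=a-e^{\ad_x}(b),\quad \Phi(x)=-x,$$
and check $\Phi\partial'=\partial\Phi$ on each generator. On $a$ and $b$ this is automatic: $\Phi$ sends them to the Maurer--Cartan elements $a$ and $x\,\cG\,0$ of $\lasu$ (the second by the Example), so $\Phi\partial'(a)=-\frac{1}{2}[\Phi a,\Phi a]=\partial(\Phi a)$ and similarly for $b$.

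The one genuine computation is on $x$. Since $\Phi$ is a Lie morphism with $\Phi(x)=-x$, substituting $-x$ for $x$ sends the operator $\frac{-\ad_x}{e^{-\ad_x}-1}$ to $\frac{\ad_x}{e^{\ad_x}-1}$, whence $\Phi(\partial' x)=\frac{\ad_x}{e^{\ad_x}-1}\bigl(x\,\cG\,0\bigr)$. By equation (\ref{ultima}) one has $x\,\cG\,0=-\frac{e^{\ad_x}-1}{\ad_x}(\partial x)$, and applying the inverse operator $\frac{\ad_x}{e^{\ad_x}-1}$ collapses the right-hand side to $-\partial x=\partial(-x)=\partial(\Phi x)$. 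Thus $\Phi$ is a DGL morphism.

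Finally I would show $\Phi$ is an isomorphism by passing to indecomposables: since $e^{\ad_x}(b)-b$ is decomposable, the induced linear map on the generating space is $a\mapsto a$, $b\mapsto a-b$, $x\mapsto -x$, which is invertible; a filtered morphism of complete free Lie algebras inducing an isomorphism on indecomposables is itself an isomorphism. I expect the step on the generator $x$ to be the only delicate point, and it is entirely absorbed by the identity (\ref{ultima}) already recorded in the Example.
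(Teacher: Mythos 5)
Your proposal is correct and follows essentially the same route as the paper: the identical morphism $\varphi(a)=a$, $\varphi(b)=a-e^{\ad_x}(b)=x\,\cG\,0$, $\varphi(x)=-x$, with commutation on $a$ and $b$ reduced to the Maurer--Cartan property of the images and the check on $x$ absorbed entirely by identity (\ref{ultima}). If anything, your bookkeeping is slightly more careful than the paper's: you correctly record that substituting $-x$ for $x$ transforms the operator $\frac{-\ad_x}{e^{-\ad_x}-1}$ into $\frac{\ad_x}{e^{\ad_x}-1}$ (the paper's displayed formula omits this sign change, though its conclusion is unaffected), and your indecomposables argument spells out what the paper dismisses as ``clearly'' an isomorphism.
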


\begin{proof}

Define,
$$
\varphi\colon (\widehat\lib(a,b,x),\partial')\longrightarrow \lasu
$$
by $\varphi(a)=a$, $\varphi(b)=a-e^{\ad_x}(b)$ and $\varphi(x)=-x$. This is clearly an isomorphism of Lie algebras which commutes with differentials on $a$ and $b$. Finally,
$$
\varphi(\partial'x)=\frac{-\ad_x}{e^{-\ad_x}-1}\bigl(a-e^{\ad_x}(b)\bigr)
$$which equals $\partial (\varphi x)=-\partial x$ by the  identity  (\ref{ultima}).

\end{proof}

\end{document}